\newtheorem{obs}[theorem]{Observation}
\newcommand{\cC}{\ensuremath{\mathcal C}}
\newcommand{\cD}{\ensuremath{\mathcal D}}
\newcommand{\cE}{\ensuremath{\mathcal E}}
\newcommand{\cI}{\ensuremath{\mathcal I}}
\newcommand{\cL}{\ensuremath{\mathcal L}}
\newcommand{\cM}{\ensuremath{\mathcal M}}
\newcommand{\cS}{\ensuremath{\mathcal S}}
\newcommand{\cU}{\ensuremath{\mathcal U}}
\newcommand{\bbN}{{\ensuremath{\mathbb N}} }
\newcommand{\bbP}{{\ensuremath{\mathbb P}} }
\newcommand{\bbR}{{\ensuremath{\mathbb R}} }
\newcommand{\bbZ}{{\ensuremath{\mathbb Z}} }
\newcommand{\D}{{\ensuremath{\Delta}}}
\newcommand{\g}{{\ensuremath{\gamma}}}
\newcommand{\G}{{\ensuremath{\Gamma}}}
\newcommand{\h}{{\ensuremath{\eta}}}
\renewcommand{\L}{{\ensuremath{\Lambda}}}
\newcommand{\m}{{\ensuremath{\mu}}}
\renewcommand{\o}{{\ensuremath{\omega}}}
\renewcommand{\O}{{\ensuremath{\Omega}}}
\newcommand{\p}{{\ensuremath{\pi}}}
\newcommand{\x}{{\ensuremath{\xi}}}
\newcommand{\z}{{\ensuremath{\zeta}}}
\newcommand{\var}{\operatorname{Var}}
\newcommand{\1}{{\ensuremath{\mathbb{1}}} }
\newcommand{\trel}{\ensuremath{T_{\mathrm{rel}}}}
\renewcommand{\le}{\leqslant}
\renewcommand{\ge}{\geqslant}
\renewcommand{\to}{\rightarrow}
\begin{document}
\section{Introduction}
\label{sec:intro}
The bisection method (also halving or two-block) is one of the fundamental techniques in the rigorous theory of kinetically constrained models (KCM), introduced by Cancrini, Martinelli, Roberto and Toninelli \cite{Cancrini08}*{Section 4} and inspired by \cite{Martinelli99}*{Proposition 3.5} for the Glauber dynamics of the Ising model. Its variations are frequently used for KCM \cites{Hartarsky20II,Hartarsky20FA,Hartarsky21a,Martinelli13}, but also successfully applied to other contexts \cites{Caputo12,Bhatnagar07}. The technique was originally developed to prove the positivity of the spectral gap of the East process (see also \cite{Aldous02}), as well as determining its sharp scaling at low temperature. For background on the East process we direct the reader to \cites{Faggionato13,Cancrini08,Ganguly15} and the references therein. An exposition of the original bisection method can be found in the upcoming monograph on KCM by Toninelli \cite{Toninelli21}.

In the present note we explore a new approach to the bisection method. In a way, the idea is the same, yet the proof and outcome are completely new. It is our hope that this new approach itself will be of independent interest and, in particular, our substitute for the two-block dynamics, Proposition~\ref{two-block}, and its proof. We apply it in the following setting of unprecedented generality. The (standard) terms used are defined formally in Section~\ref{sec:formal}. Consider KCM
\begin{itemize}
    \item on an arbitrary volume $L\subset\bbZ$, $1\le|L|\le\infty$, which need not be an interval;
    \item with arbitrary boundary conditions;
    \item conditioned to belong to an arbitrarily chosen irreducible component of the state space;
    \item with arbitrary on-site finite state spaces, which may vary from site to site and need not have uniformly bounded size or atom probabilities, but the probability of being infected is uniformly bounded from below by $q>0$;
    \item with arbitrary update rules, which may vary from site to site, but have a range uniformly bounded by $R<\infty$. Some sites may be completely unconstrained or, inversely, frozen.
\end{itemize}
In this setting we prove that for some $C_R>0$ depending only on $R$
\[\trel\le (2/q)^{C_R \log(\min((2/q),|L|))},\]
which is known to be sharp for all homogeneous rooted KCM on an interval \cites{Mareche20Duarte,Mareche20combi}, including East \cites{Aldous02,Cancrini08}, in the most interesting regime, $q\to0$. In addition, it may come as a surprise to specialists that this is also sharp for some homogeneous unrooted KCM on intervals, despite the fact that on $\bbZ$ their relaxation time is only $q^{-\Theta(1)}$  (see \cite{Martinelli19a} for definitions and background).

Let us note that for such general KCM there are usually many irreducible components (there are always at least two, save for trivialities) and their combinatorial structure can be very intricate. They have proved hard to deal with due to the long-range dependencies they introduce, like those present in conservative KCM. Consequently, the only nontrivial case in which the relaxation in an irreducible component is under control \cite{Blondel13} (see also \cites{Cancrini08,Cancrini09}) is the FA-1f model on an interval in its so-called ergodic component---the only nontrivial component for this KCM. An example of a situation in which such conditioned inhomogeneous one-dimensional KCM can arise naturally from ordinary KCM in higher dimensions can be found in \cite{Hartarsky20II}*{Appendix A.1} and originally motivated our work. 

We direct the reader to \cites{Shapira19polluted,Shapira20a,Shapira19} for inhomogeneous KCM, to \cites{Cancrini08,Cancrini09,Toninelli21} for KCM with various rules and boundary conditions and to \cites{Martinelli19a,Hartarsky20CBSEP} for general state spaces. Yet, we emphasise that no two among: general state spaces, inhomogeneous rules and irreducible components have featured simultaneously until present. Formally, as we will see, non-interval domains, boundary conditions and irreducible components other than the ergodic one can be absorbed in the inhomogeneity of the rules, but such arbitrarily inhomogeneous KCM have not been considered previously.

\section{Formal statement}
\label{sec:formal}
\subsection{Definition of the models and notation}
For all \emph{sites} $x\in \bbZ$ fix a finite positive probability space $(\cS_x,\p_x)$ called \emph{state space} and $\cI_x\subset\cS_x$ satisfying $\p_x(\cI_x)\ge q>0$. We say that $x\in \bbZ$ is \emph{infected} when the event $\cI_x$ occurs and \emph{healthy} otherwise. Thus, we refer to $q=\inf_{x\in\bbZ}\p_x(\cI_x)$ as the \emph{infection probability}. The \emph{volume} $L\subset\bbZ$ is a finite or infinite set. Consider the corresponding product space $\cS_L=\prod_{x\in L}\cS_x$ and measure $\p_L=\bigotimes_{x\in L}\p_x$. We will usually denote elements of $\cS_L$ (\emph{configurations}) by $\h,\o,\x$, etc.\ and corresponding restrictions to any $X\subset L$ by $\h_X$ and $\h_x$ when $X=\{x\}$. A \emph{boundary condition} is any $\o\in\cS_{\bbZ\setminus L}$ or an appropriate restriction, when some of the states of $\o$ are unimportant. Given two configurations $\h_L\in\cS_L$ and $\h'_{L'}\in\cS_{L'}$ for volumes $L,L'$ with $L\cap L'=\varnothing$, we denote by $\h_L\cdot\h'_{L'}\in\cS_{L\cup L'}$ the configuration equal to $\h_x$ if $x\in L$ and to $\h'_x$ if $x\in L'$.

For all $x\in L$ we fix \emph{an update family} $\cU_x$ that is a finite family of finite subsets of $\bbZ\setminus\{x\}$. Its elements are called \emph{update rules}. We assume that there exists a \emph{range} $R\in[1,\infty)$ such that for all $x\in L$, $U\in\cU_x$ and $y\in U$ we have $|x-y|\le R$. For a site $x\in L$, a configuration $\h\in\cS_L$ and a boundary condition $\o\in\cS_{\bbZ\setminus L}$, we say that the \emph{constraint at $x$ is satisfied} if
\[c_x^\o(\h)=\1_{\exists U\in\cU_x,\forall y\in U,(\h\cdot\o)_{y}\in\cI_y}\]
equals $1$. In words, we require that for at least one of the rules all its sites are infected, taking into account the boundary condition. The transitions allowed for the KCM are those changing the state of a single site whose constraint is satisfied (before and, equivalently, after the transition, since rules for $x$ do not contain $x$). In these terms, $\cU_x=\varnothing$ corresponds to a site unable to update under any circumstances, while $\cU_x\ni\varnothing$ corresponds to a site whose constraint is always satisfied. The transitions define an oriented graph with vertex set $\cS_L$ and symmetric edge set (containing the reverses of its edges). We call its connected components \emph{irreducible components} of the KCM and view them as events. Given an irreducible component $\cC\subset\cS_L$, we set $\m_L=\p_L(\cdot|\cC)$. We further write $\m_X=\m_L(\cdot|\h_{L\setminus X})$, $\m_x=\m_{\{x\}}$ for $x\in\bbZ$ and $X\subset \bbZ$ and denote by $\var_X$ and $\var_x$ the corresponding variances.

The \emph{general KCM} defined by $L$, $\cS_x$, $\p_x$, $\cI_x$, $\o$, $\cU_x$ and $\cC$ is the continuous time Markov process with generator and Dirichlet form acting on functions $f:\cC\to\bbR$ depending on the states of finitely many sites given by
\begin{align*}
\cL_L(f)(\h)={}&\sum_{x\in L}c_x^\o(\h)\cdot(\m_x(f(\h))-f(\h)),\\
\cD_L(f)={}&\sum_{x\in L}\m_L\left(c_x^\o\cdot\var_x(f)\right)
\end{align*}
respectively. In other words, this is the continuous time Markov process which resamples the state of each site at rate 1 w.r.t.\ $\m_x$, provided its constraint is satisfied. It is useful to note that when $c_x^\o=1$, we have $\m_x=\p_x$. For the existence of such infinite-volume processes see \cite{Liggett05} and for basic background refer to \cites{Cancrini08,Cancrini09}. It is also not hard to check that $\p_L$ and, therefore, $\m_L$ is a reversible invariant measure for the process. Finally, 
\begin{equation}
\label{eq:def:trel}
\trel^{-1}=\inf_{f\neq\text{const.}}\frac{\cD_L(f)}{\var_L(f)}\in[0,1]
\end{equation}
is the \emph{spectral gap} of $\cL_L$ or inverse \emph{relaxation time}.

\subsection{Result}
With this terminology, our main result is stated as follows.
\begin{theorem}
\label{main}
There exists an absolute constant $C>0$ such that for any range $R\in[1,\infty)$, infection probability $q\in(0,1]$, volume $L\subset \bbZ$ and general KCM with these parameters it holds that
\begin{equation}
\label{eq:main}
\trel\le (2/q)^{CR^2\min(\log|L|,R\log(2/q))}.
\end{equation}
\end{theorem}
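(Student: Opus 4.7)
The plan is to prove Theorem~\ref{main} by an induction on $|L|$ driven by the new bisection tool Proposition~\ref{two-block}. First I would reduce to finite intervals: arbitrary $L\subset\bbZ$, boundary conditions $\o$, and irreducible components $\cC$ can all be absorbed into the sitewise inhomogeneity of $(\cS_x,\p_x,\cI_x,\cU_x)$ on $[\min L,\max L]$, and infinite volumes are handled by monotone exhaustion along nested finite truncations.

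For the inductive step on an interval $L$ of length $\ell$, I would split $L$ into overlapping halves $L_1,L_2$ sharing a central buffer $B=L_1\cap L_2$ of width $s=\Theta(Rq^{-R}\log(2/q))$. The buffer size is tuned so that the \emph{good event} $G$ that $B$ contains $R$ consecutive infected sites has $\m_L$-probability at least $1-q/2$, uniformly in $\o$ and $\cC$; this uses that conditioning on an irreducible component can only distort the probability of a local fully-infected pattern by an $R$-dependent multiplicative factor, together with a union bound over disjoint length-$R$ windows inside $B$. Proposition~\ref{two-block} then yields, for any local $f$,
\[
\var_L(f)\le K\bigl(\m_L(\var_{L_1}(f))+\m_L(\var_{L_2}(f))\bigr)
\]
with $K=K(q,R)=(2/q)^{O(R^2)}$. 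In the novel probabilistic proof announced in the abstract, the factor $K$ accounts for realising one block resampling by a legal chain of $O(s/R)$ single-site updates in which the fully infected window born in $B$ is slid toward whichever half is being resampled.

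To close the induction, I would verify that each conditional measure $\m_{L_i}^{\h_{L_{3-i}}}$ is itself the invariant measure of a general KCM on $L_i$ with boundary $\h_{L_{3-i}}\cdot\o$ and an induced irreducible component of the same infection probability $\ge q$ and range $\le R$. Applying the inductive bound $\trel(|L_i|)$ to $\var_{L_i}(f)$ and observing that $\m_L(c_x^\o\var_x(f))$ for $x\in L_i$ is dominated by the corresponding slice of $\cD_L(f)$ produces the recursion
\[
\trel(\ell)\le K\cdot\trel(\lceil\ell/2\rceil+s).
\]
Iterating this recursion $n$ times gives $\trel(\ell)\le K^n\cdot\trel(\ell/2^n+O(s))$. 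The two branches of the minimum in~\eqref{eq:main} correspond to two choices of $n$: either iterate until the volume shrinks below $s$, using $n=O(\log\ell)$ steps and yielding $(2/q)^{O(R^2\log\ell)}$; or cap at $n=O(R\log(2/q))$ steps, after which $\trel$ at the base scale is controlled directly on a length-$(2/q)^{O(R)}$ interval, contributing $(2/q)^{O(R^3\log(2/q))}$.

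The hard part will be the probabilistic proof of Proposition~\ref{two-block} and its compatibility with the inhomogeneous conditioning: the irreducible-component conditioning introduces long-range dependencies that could a priori spoil the bisection, but the fully infected window inside $B$ locally decouples the two halves and ensures that the induced irreducible components on $L_1$ and $L_2$ are consistent restrictions of $\cC$ compatible with the range-$R$ dynamics. Once this inheritance is established, the recursion machinery above closes by routine induction.
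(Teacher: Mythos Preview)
Your reduction step and the overall bisection skeleton match the paper, but the recursion you set up cannot produce the second branch of the minimum in~\eqref{eq:main}. With a fixed per-step cost $K=(2/q)^{O(R^2)}$ and a fixed overlap width $s$, iterating $\trel(\ell)\le K\,\trel(\lceil\ell/2\rceil+s)$ gives $\trel(\ell)\le K^{n}\,\trel(\ell/2^{n}+O(s))$, and reaching a base scale of order $s$ always requires $n$ of order $\log\ell$. Your claim that one may ``cap at $n=O(R\log(2/q))$ steps, after which $\trel$ at the base scale is controlled directly on a length-$(2/q)^{O(R)}$ interval'' is false for large $\ell$: after that many halvings the remaining volume is still $\ell/2^{n}$, which can be arbitrarily large. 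Consequently your argument only delivers $(2/q)^{O(R^2\log|L|)}$ and never the volume-independent bound $(2/q)^{O(R^3\log(2/q))}$.

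The paper avoids this by proving a \emph{two-regime} version of Proposition~\ref{two-block}: the factor $\g(\D)$ equals $(2/q)^{CR^2}$ only when the overlap $\D$ is small, while for $\D\ge CR^2/q^{CR}$ one has $\g(\D)=1+\exp(-\D q^{CR}/(CR^2))$, which is exponentially close to $1$. The overlap is then taken to grow with the current scale, $\D\approx|L|^{1/3}$, and the splitting point is averaged over $N\approx|L|^{1/3}$ disjoint positions (this also absorbs the double-counting of $L_1\cap L_2$ in the Dirichlet form, which your recursion ignores). The resulting recursion $\G_{|L|}\le(1+1/N)\g(\D)\G_{|L|/2+N\D}$ has a per-step factor that is essentially $1$ while $|L|\gg(2/q)^{O(R)}$, so the product over all large scales is $O(1)$; the expensive factor $(2/q)^{CR^2}$ is paid only on the last $O(R\log(2/q))$ scales, yielding the second branch. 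Your proposal captures only the ``easier case'' of Proposition~\ref{two-block} (constant $\g$), which, as the paper itself notes, suffices merely for $|L|\le(2/q)^{CR}$.
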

\begin{remark}
Equation~\eqref{eq:main} and its proof apply to general KCM on a circle $\bbZ/n\bbZ$ (uniformly on $n$). For trees of maximum degree $\D$ and diameter $D$ we can only retrieve that for some $C$ depending on $\D$ and $R$,
\[\trel\le (2/q)^{C\log D}.\]
\end{remark}

Before moving on to the proof of Theorem~\ref{main}, let us mention a few applications.

As noted in Section~\ref{sec:intro}, Theorem~\ref{main} is sharp not only for all homogeneous rooted supercritical models, but also for some unrooted ones. Indeed, an unrooted KCM in finite volume may lack clusters of infections mobile in both directions, but only be able to create them, using ones mobile in a single direction. Such is the case of the homogeneous $\{\{-2\},\{1,2\}\}$-KCM on $L=\{1,\dots,2n\}$ with healthy boundary condition, only $1$ and $|L|$ infected initially (so that it is in its ``ergodic component,'' able to infect the entire volume). As usual, a test function showing that $\trel\ge \exp(\O(\log^2(1/q)))$ for $|L|=1/q\to\infty$ is the indicator of configurations reachable from the initial state above without creating $\log(1/q)/10$ infections simultaneously. 

This phenomenon is not related to the lack of symmetry---the same reasoning applies to the $\{\{-9,-8,-6\},\{-7,-6,-4\},\{-6,-5,-3\},\{3,5,6\},\{4,6,7\},\{6,8,9\}\}$-KCM on $L=\{1,\dots,6n+3\}$ with the ergodic initial condition $\{1,2,4,6n-1,6n,6n+3\}$. Indeed, for this KCM the sites $\{6n-1,6n,6n+3\}$ are unable to infect anything, while $\{1,2,4\}$ may infect a group of sites of the form $6k+1,6k+2,6k+4$, provided the previous such group is already present to its left. Hence, a similar test function yields the optimality of Eq.~\eqref{eq:main}.

A more subtle application of Theorem~\ref{main} concerns homogeneous KCM in higher dimensions. Consider a one-dimensional subset $L$ of $\bbZ^d$ for $d\ge 2$, that is a sequence of sites such that if two sites are at distance more than $C$ in the sequence, they are at distance more than $R$ in $\bbZ^d$, where $R$ is the range of the $d$-dimensional KCM and $C<\infty$ is some constant possibly depending on the KCM. In words, this is a discrete version of a one-dimensional manifold: a parametrised curve which may not come close to itself non-locally in the parametrisation (e.g.\ a line segment intersected with $\bbZ^d$). Fixing the state $\o$ of all sites in $\bbZ^d\setminus L$, the dynamics allowed to flip only sites in $L$ becomes a one-dimensional general KCM (with range $C$ rather than $R$, but finite) treated by our result. Notice that, even if the original $d$-dimensional KCM is homogeneous and considered in infinite volume on its ergodic component, the resulting one-dimensional one may become inhomogeneous due to $L$ bending in $\bbR^d$ or due to $\o$ not being translation invariant. Furthermore, it may naturally occur that this restricted dynamics is no longer able to infect all of $L$ without changing the boundary condition $\o$ (which is prohibited), so irreducible components become relevant.

The above application is the main motivation for our work. Indeed, control on the relaxation of a line segment at the boundary of a large infected region with arbitrary boundary condition elsewhere was needed for establishing refined universality results for two-dimensional KCM in \cite{Hartarsky20II}. More precisely, \cite{Hartarsky20II}*{Lemma A.1} is a direct corollary of Theorem~\ref{main} providing a much more tractable proof than the cumbersome canonical path approach outlined in that work.

\section{Proof}
\label{sec:proof}
Let us begin with a straightforward but important corollary of reversibility.
\begin{obs}
\label{bootstrap}
The irreducible component of a general KCM naturally identifies with the set of sites which can be eventually updated, together with the state of all remaining sites. We call the set of the sites that can be updated in $L$ \emph{closure} and denote it by $\{\h\}_L^\o\subset L$. We denote the state of the remaining sites by $\h^0:=\h_{L\setminus\{\h\}_L^\o}$ and refer to it as \emph{initial condition}.
\end{obs}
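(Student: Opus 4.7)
The plan is to verify that the two data objects in the statement, the closure and the initial condition, are actually well-defined functions of the irreducible component rather than of a chosen representative.

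Concretely, for $\h$ in an irreducible component $\cC$ I would take the definition $K(\h) := \{x \in L : \exists \h' \in \cC,\ \h'_x \ne \h_x\}$ (this is what $\{\h\}_L^\o$ is) and $\h^0(\h) := \h_{L \setminus K(\h)}$, and check that each is invariant as $\h$ varies inside $\cC$. Since $\cC$ is a connected component of a symmetric transition graph, it suffices to verify invariance across a single allowed transition $\h \to \h''$, which modifies exactly one site $x$.

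For $\h^0$, the site $x$ modified by the transition lies in $K(\h)$ (witnessed by $\h''$ itself), so $\h$ and $\h''$ agree on $L \setminus K(\h)$, giving the desired equality for the restricted configurations provided the closures match. For the equality $K(\h) = K(\h'')$ I would perform a short case analysis. Given $y \in K(\h)$ with witness $\h''' \in \cC$, the same $\h'''$ is reachable from $\h''$ by symmetry of the transitions; if $\h'''_y \ne \h''_y$ we are done directly, and otherwise $\h_y \ne \h'''_y = \h''_y$ forces $y = x$, in which case $\h$ itself (reachable from $\h''$) witnesses $x \in K(\h'')$. The reverse inclusion is symmetric.

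I do not anticipate any genuine difficulty here: the content is essentially definitional once one notices that the symmetric edge structure makes reachability classes coincide with equivalence classes. The only step requiring a sliver of care is the case split ensuring that the updated site $x$ itself is correctly placed in $K(\h'')$. Once invariance is in hand, the pair $(K(\h), \h^0(\h))$ descends to an attribute of the component itself, justifying the notation $\{\h\}_L^\o$ and $\h^0$.
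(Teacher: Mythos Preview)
Your argument that $K(\h)$ and $\h^0(\h)$ are invariant along a single allowed transition, and hence constant on the irreducible component $\cC$, is correct and cleanly executed. However, the Observation asserts more than well-definedness of the notation: it says that the component \emph{naturally identifies with} the pair $(\{\h\}_L^\o,\h^0)$, i.e.\ that the assignment $\cC\mapsto(K,\h^0)$ is a bijection onto its image. You have established only that this assignment is well-defined; the converse direction---that two configurations sharing the same closure $K$ and the same initial condition $\h^0$ lie in the same component---is missing.

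This missing direction is not cosmetic: it is exactly what the paper invokes later. In the proof of Claim~\ref{proba} one reads that if $x\in\{\h\}_\L^\o$ and $\h_x\notin\cI_x$, then every $\h'$ agreeing with $\h$ off $x$ also lies in $\cC$. That conclusion does not follow from your invariance statement, which only says that configurations already in $\cC$ share the pair $(K,\h^0)$; it says nothing about whether a given $\h'$ with the same pair must belong to $\cC$.

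The argument you need uses the monotonicity of the constraints in the set of infections. From any $\h$ one can, by iteratively infecting sites whose constraint is currently satisfied (a legal KCM path), reach a configuration whose infection set is the bootstrap closure; at that point $c_x=1$ for every $x\in K$, so each such site may be updated to a fixed reference state $\iota_x\in\cI_x$. The resulting configuration depends only on $(K,\h^0)$, so any two configurations with the same pair can both reach it and are therefore in the same component. The paper states the Observation without proof, but this is the substantive half you should add to justify the word ``identifies.''
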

Since sites in $L\setminus\{\h\}_L^\o$ can never be updated, we may remove them from $L$ and replace $\o$ by $\o\cdot\h_{L\setminus\{\h\}_L^\o}$. With this reduction, we may assume that $\{\h\}_L^\o=L$ for the original general KCM. Further note that we may absorb any boundary condition in the inhomogeneous update rules by removing infected sites in $\o$ from update rules and removing update rules containing non-infected sites in $\o$. Thus, we may further assume that our initial general KCM is defined so that its rules do not depend on the boundary condition and therefore discard $\o$. Moreover, once the boundary condition is irrelevant, we may replace $L$ by an interval of length $|L|$, if $L$ is finite, and $\bbN$ or $\bbZ$, if $L$ is infinite in one or two directions. Indeed, we can map $L$ onto $\{1,\dots, |L|\}$, $\bbN$, $-\bbN$ or $\bbZ$, preserving the order, and this does not increase the range $R$. Finally, approximating $L$ by large finite segments if it is infinite (see \cite{Cancrini08}*{Section 2} and \cite{Liggett05}*{Chapter 4}), we may assume $|L|<\infty$.

Henceforth, we fix a general KCM specified by its volume $L$, state spaces $(\cS_x,\p_x)$, infection events $\cI_x$, and update families $\cU_x$ subject to the above simplification (we call such a model \emph{simplified}):
\begin{itemize}
\item for all $x\in L$ and $U\in \cU_x$ we have $U\subset L$;
\item $\{\h\}_L=L$;
\item $L=\{1,\dots,|L|\}$ with $|L|<\infty$.
\end{itemize}
Note that in the course of the proof we will consider domains smaller than $L$ and will then specify the closure, initial and boundary conditions. We will prove Theorem~\ref{main} by induction on $|L|$. The induction step is provided by the following two-block result, which is the core of the argument.
\begin{proposition}
\label{two-block}
Let $L_1=\{1,\dots,\ell\}$ and $L_2=\{\ell-\D+1,\dots,|L|\}$ with $\ell\in[1,|L|]$ and $\D\in[0,\ell]$. Then
\begin{equation}
\label{eq:two-block}
\var_L(f)\le \g(\D) \sum_{i\in\{1,2\}}\m_L\left(\var_{L_i}\left(f|\{\h\}^{\h_{L\setminus L_{i}}}_{L_i},\h_{L_i}^0\right)\right),\end{equation}
setting for some absolute constant $C>0$
\[\g(\D)=\begin{cases}1+\exp\left(-\D q^{CR}/\left(CR^2\right)\right)&\D\ge CR^2/q^{CR}\\
(2/q)^{CR^2}&\text{otherwise}.\end{cases}\]
\end{proposition}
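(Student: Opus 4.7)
The right-hand side of \eqref{eq:two-block} is the Dirichlet form, up to the factor $\gamma(\Delta)$, of a continuous-time two-block Markov chain on the irreducible component $\cC$: with rate $1$ each, resample $\eta_{L_i}$ from $\m_L(\cdot\mid\eta_{L\setminus L_i})$ for $i\in\{1,2\}$ (which is exactly $\m_{L_i}$ restricted to the sub-irreducible component $\{\eta\}^{\eta_{L\setminus L_i}}_{L_i}$ with initial condition $\eta_{L_i}^0$). By the Poincaré inequality, the claim reduces to bounding the relaxation time $T_{\mathrm{block}}$ of this two-block chain by $\gamma(\Delta)$. I would therefore spend the bulk of the proof on $T_{\mathrm{block}}$, via a coupling argument centred on the overlap $O=L_1\cap L_2=\{\ell-\Delta+1,\dots,\ell\}$ of length~$\Delta$.

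\textbf{Coupling via a good overlap event.} Run two copies of the block chain from arbitrary $\eta,\tilde\eta\in\cC$. At each $L_1$-resampling step I want to couple them so that $\eta_{L_1}=\tilde\eta_{L_1}$ with high probability, and symmetrically for $L_2$. To this end, introduce a ``good event'' $G\subset\cS_O$ consisting of overlap configurations that contain, in some deterministic fixed location inside $O$, a pattern of length $O(R)$ of infected sites with the property that every update rule of range $\le R$ based in $L_1$ (respectively $L_2$) becomes satisfiable using only the overlap, independently of $\eta_{L\setminus L_1}$ (respectively $\eta_{L\setminus L_2}$). On $G$ the conditional law of $\eta_{L_1}$ given $\eta_{L\setminus L_1}$ does not depend on $\eta_{\{\ell+1,\dots,|L|\}}$ beyond the irreducible component, which itself can be read off locally from $\eta_O$; so a maximal coupling makes the two copies agree on $L_1$ in a single step. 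Iterating with the symmetric step gives full coalescence, hence $T_{\mathrm{block}}\le 1+\bbP(\text{no }G\text{ realised})$.

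\textbf{Probability of $G$ and small-$\Delta$ bound.} For the first regime of $\gamma$, partition $O$ into $\lfloor\Delta/(CR)\rfloor$ disjoint blocks of length $CR$, each of which hosts the requisite infected pattern with probability at least $q^{CR}$ under $\p_O$. Independence of the blocks in the unconditioned product measure plus a quantitative control (FKG-type, or a direct comparison) of the distortion introduced by conditioning on $\cC$ yields
\[\m_L(\eta_O\notin G\mid\eta_{L\setminus L_i})\le\exp\!\left(-\Delta q^{CR}/(CR^2)\right),\]
valid for $\Delta\ge CR^2/q^{CR}$, which delivers the first case of $\gamma(\Delta)$. For $\Delta<CR^2/q^{CR}$, the overlap has at most a constant-in-$\Delta$ size, so I would instead use the brute-force bound: the block chain is irreducible on $\cC$ with each elementary move having conditional probability at least the minimum atom mass of $\m_{L_i}$ on its irreducible component, which is bounded below by $(q/2)^{C'R^2}$ since at most $O(R^2)$ sites effectively contribute to any such atom; this gives $T_{\mathrm{block}}\le(2/q)^{CR^2}$.

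\textbf{Main obstacle.} The subtlety is almost entirely in the definition and probabilistic analysis of $G$. Defining $G$ locally inside $O$ so that it simultaneously (a) decouples the $L_1$-side rules of range $\le R$ from $\eta_{\{\ell+1,\dots\}}$, (b) does the same for the $L_2$-side, and (c) makes the irreducible component of the restricted dynamics on $L_i$ determinable from $\eta_O$ alone, requires a careful look at what ``helper'' patterns do simultaneously for both blocks across the interface. Once $G$ is in place, the second hurdle is controlling the effect of conditioning on $\cC$ on the probability of $G$: since $\cC$ can couple very distant sites, the conditional marginal on $O$ is not a product, and the lower bound $q^{CR}$ per block must be derived by a local argument (e.g.\ by exhibiting, deterministically from any $\eta\in\cC$, a configuration in the same component agreeing with $\eta$ outside a given small block and exhibiting the desired pattern inside it, then using reversibility).
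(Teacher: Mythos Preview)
Your framework is right---couple two block-chain copies and bound the meeting time---but the one-shot coupling has a genuine gap. You claim that on $G$ the irreducible component of the $L_1$-dynamics ``can be read off locally from $\eta_O$''. This fails in the generality of the paper: the closure $\{\eta\}_{L_1}^{\eta_{L\setminus L_1}}$ depends on the boundary $\eta_{L\setminus L_1}=\eta_{\{\ell+1,\dots,|L|\}}$, which lies entirely outside $O$ and differs between the two copies. Even with a fully infected block $B\subset O$ present in the current $\eta$, isolation only forces the $L_1$-closure to contain everything to the \emph{left} of $B$; the part of the $L_1$-closure to the right of $B$ still depends on $\eta_{L\setminus L_1}$. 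Hence the two resampling laws on $L_1$ are genuinely different and no coupling can make the full $\eta_{L_1}$ and $\tilde\eta_{L_1}$ agree in a single step. Your small-$\Delta$ argument has a separate error: the minimum atom of $\mu_{L_i}$ on its irreducible component is exponentially small in $|L_i|$, not in $R^2$.

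The missing idea is an iterative growth of the closure. The paper fixes a middle block $M\subset O$ of width $2R+1$ and tracks $X(t)=\big|M\cap\{\omega(2t)\}_{L_1}^{(\omega(2t))_{L\setminus L_1}}\big|$. The key combinatorial lemma (relying on $\{\cdot\}_L=L$ after the paper's simplification) shows that after an $L_1$-then-$L_2$ double step in which a suitably infected block appears on each side of $M$, the quantity $X$ strictly increases unless it is already $2R+1$. Thus in $O(R)$ successful double steps both chains have $M$ inside their $L_1$-closure; \emph{only then} does the paper attempt to match, and only on the portion of $L_1$ to the left of a common infected block located in the freshly sampled configurations $\xi,\xi'$, not in the old overlap. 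At that point both closures contain all of $\{1,\dots,x\}$ for any $x$ left of $M$, so the conditional laws of $\xi_{\{1,\dots,x\}}$ and $\xi'_{\{1,\dots,x\}}$ genuinely coincide and the partial coupling is legitimate. The small-$\Delta$ bound uses the same mechanism in cruder form: $R+1$ alternating updates, each infecting at most $R$ prescribed boundary sites (probability $\ge q^R$ each via stochastic domination), deterministically drive the interface boundary condition to fully infected, after which one further coupled pair of updates suffices, giving $(2/q)^{O(R^2)}$.
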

\begin{remark}
\label{aux}
This statement can be viewed as a Poincar\'e inequality for a Markov process with two symmetric moves performed at rate 1. We update the state $\h_{L_i}$ from the measure $\p_{L_i}$ conditioned on the irreducible component of the current state in $L_i$. Crucially, the closure is taken only inside $L_i$, without infecting sites in $L\setminus L_{i}$ and going back to $L_i$, but using $\h_{L\setminus L_{i}}$ as a (frozen) boundary condition. In particular, the variance in Eq.~\eqref{eq:two-block} is not $\var_{L_i}(f)$.
\end{remark}

Before proving Proposition~\ref{two-block}, let us briefly recall how to deduce Theorem~\ref{main}, referring to \cite{Cancrini08}*{Theorem 6.1} for more details. Let $\G_l$ denote the maximum of $\trel$ over all general KCM (simplified or not) of range at most $R$ and infection probability at least $q$ on volume with cardinal at most $l$. Plugging Eq.~\eqref{eq:def:trel} into the r.h.s.\ of Eq.~\eqref{eq:two-block}, we get
\[\var_L(f)\le \g(\D)\G_{\max(L_1,L_2)}\m_L\left(\sum_{x\in L} c_x\cdot\var_x(f)+\sum_{x\in L_1\cap L_2}c_x\cdot\var_x(f)\right).\]
We average this over $N\approx|L|^{1/3}$ choices of $\ell$, so that the $L_1\cap L_2$ for different choices are disjoint. All $\ell$ are chosen so that $\ell-|L|/2\in[-N\D/2,N\D/2]$ for $\D\approx|L|^{1/3}$ fixed. This yields the recurrence relation
\[\G_{|L|}\le (1+1/N)\g(\D)\G_{|L|/2+N\D},\]
since the simplification operation may only decrease $|L|$ and $R$ and increase $q$.  Iterating this inequality, we derive the desired Eq.~\eqref{eq:main}.

Thus, our task is to prove Proposition~\ref{two-block}, for which we need the following.
\begin{claim}
\label{proba}
Let $\L$ be a volume. Then for any irreducible component $\cC=(\{\h\}_\L^\o,\h^0)$, under $\p_\L(\cdot|\cC)$ the infections in the closure $(\1_{\cI_x})_{x\in\{\h\}_\L^\o}$ stochastically dominate i.i.d.\ Bernoulli variables with parameter $q$.
\end{claim}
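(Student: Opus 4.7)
The plan is to reduce stochastic domination to the pointwise conditional bound
\[
\m_\L(\h_x\in\cI_x\mid\h_{\L\setminus\{x\}}=\z)\ge q
\]
for every $x\in\{\h\}_\L^\o$ and every $\z\in\cS_{\L\setminus\{x\}}$ in the support of the marginal. Granting this, the domination would follow from a standard sequential coupling: enumerate $\{\h\}_\L^\o=\{x_1,\dots,x_n\}$ and use auxiliary $[0,1]$-uniforms to couple $\1_{\cI_{x_i}}$ with an i.i.d.\ Bernoulli$(q)$ variable $\xi_{x_i}$ so that $\1_{\cI_{x_i}}\ge\xi_{x_i}$; this works because by the tower property the displayed bound implies $\m_\L(\1_{\cI_{x_i}}=1\mid\1_{\cI_{x_1}},\dots,\1_{\cI_{x_{i-1}}})\ge q$ for each $i$.

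For the pointwise bound, write $A=\{s\in\cS_x:s\cdot\z\in\cC\}$, which contains $\h_x$; since conditionally on $\h_{\L\setminus\{x\}}=\z$ and $\cC$ the distribution of $\h_x$ is $\p_x$ restricted to $A$, the bound reads $\p_x(\cI_x\cap A)/\p_x(A)\ge q$. The two essential inputs are that $c_x$ is independent of $\h_x$, and that every $c_y$ is monotone non-decreasing in the infection configuration (adding an infection can only help satisfy a rule). I would split on $|A|$. If $|A|\ge 2$, picking distinct $s,s'\in A$ and a legal path in $\cC$ from $s\cdot\z$ to $s'\cdot\z$, the first step updating $x$ would occur at some $s\cdot\t$ with $c_x(\t)=1$, so a further update yields $s''\cdot\t\in\cC$ for any $s''\in\cI_x$. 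The first moves of the path use $\h_x=s$, and by monotonicity they remain legal with $\h_x$ replaced by $s''\in\cI_x$ (which either preserves $\1_{\cI_x}$ or raises it from $0$ to $1$); reversing them from $s''\cdot\t$ would deliver $s''\cdot\z\in\cC$, proving $\cI_x\subseteq A$, so the ratio is at least $q$.

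If instead $|A|=1$, so $A=\{\h_x\}$, I would show $\h_x\in\cI_x$ (making the ratio equal to $1$) by contradiction. Assuming $\h_x\notin\cI_x$, the fact that $x\in\{\h\}_\L^\o$ would yield a legal path from $\h$ whose first update of $x$ occurs at some $\h_x\cdot\t\in\cC$ with $c_x(\t)=1$. By monotonicity, the same prefix stays legal when $\h_x$ is replaced throughout by any $s\in\cI_x$, so the argument of the previous case produces $s\cdot\z\in\cC$, i.e.\ $s\in A$; since $s\neq\h_x$, this contradicts $|A|=1$. The subtle point in both cases is precisely this substitution of the $x$-coordinate along a multi-step path, and monotonicity of the $c_y$'s in the infection configuration is the crucial ingredient that makes it legitimate.
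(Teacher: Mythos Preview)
Your proof is correct and follows essentially the same approach as the paper: both establish the pointwise bound $\m_\L(\cI_x\mid\h_{\L\setminus\{x\}})\ge q$ for $x$ in the closure via the dichotomy ``either every state at $x$ lies in $\cC$, or only infected ones do,'' and then deduce the joint domination. The paper obtains this dichotomy in one line by invoking Observation~\ref{bootstrap} (if $\h_x\notin\cI_x$ then changing $\h_x$ leaves the closure and the frozen part unchanged, hence $A=\cS_x$), whereas you unpack the same content via explicit path arguments and monotonicity of the constraints; your case split on $|A|$ is a slightly more elaborate organisation of the same dichotomy, but the substance is identical.
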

\begin{proof}
Fix $x\in\{\h\}_\L^\o$ and a configuration $\h\in\cC$. Observe that if $\h_x\not\in\cI_x$, then every $\h'$ such that $\h'_y=\h_y$ for all $y\in \L\setminus\{x\}$ is also in $\cC$ by Observation~\ref{bootstrap}. Hence, conditionally on $\h_{\L\setminus \{x\}}$ and $\cC$, either $\cI_x$ occurs a.s.\ or $\h_x$ has the law $\p_x$. In both cases the conditional law of $\1_{\cI_x}$ dominates a Bernoulli one with parameter $q$.
\end{proof}
\begin{proof}[Sketch of the easier case of Proposition~\ref{two-block}]
As a warm-up, let us sketch the proof of Eq.~\eqref{eq:two-block} with $\g(\D)=(2/q)^{CR^2}$, which is valid for all values of $\D$. 

We aim to couple two copies $\h$ and $\h'$ of the chain in Remark~\ref{aux}, so that they meet with appropriate rate. To do this, we require that the following sequence of events all occur in both chains uninterrupted by any other updates. Each chain is updated on $L_1$ to a state such that the sites in $L_1\setminus L_2$ at distance at most $R$ from $L_2$ (if $|L_1\setminus L_2|<R$, take all sites in $L_1\setminus L_2$) which are in the closure $\{\h\}_{L_1}^{\h_{L\setminus L_1}}$ of the current state in $L_1$ are infected. Then do the same in $L_2$, infecting all possible sites at distance at most $R$ from $L_1$ in $L_2\setminus L_1$. Repeat this couple of operations $R+1$ times. The configurations provided to $\h$ and $\h'$ so far are chosen independently, but updates occur at the same times for both. Next update $L_1$ in both $\h$ and $\h'$ to the same configuration still with infections next to $L_2$ as above and finally update $L_2$ in both chains to the same configuration, forcing them to meet.

In order for this to work, we need two ingredients. Firstly, we need to check that the rate at which this sequence of updates occurs is at least $(q/2)^{CR^2}$. Indeed, the probability that fewer than $2(R+2)$ updates occur up to time $2^{CR}$ is small; the probability that the first $2(R+2)$ updates occur in the right positions (in $L_1$ then in $L_2$, again in $L_1$, etc.) is $2^{-2(R+2)}$; from Claim~\ref{proba} the probability of infecting the desired (at most $R$) sites is at least $q^{R}$ (this needs to happen $4R+5$ times in total). Secondly, we need to check that this is a valid coupling, namely that in the last two steps the two chains are indeed resampled from the same distribution. For this it suffices to see that after $R+1$ repetitions of the alternating updates in $L_1$ and $L_2$, necessarily the $R$ sites in $L_2\setminus L_1$ closest to $L_1$ are all infected. This is not surprising, since each time we provide the best possible boundary condition and so the sequence of these boundary conditions is nondecreasing. 

Therefore, it remains to see that after a couple of updates as above either the boundary condition is already fully infected or it increases strictly. Assume the last $R$ sites in $L_1\setminus L_2$ remain unchanged after updating $L_2$ and then $L_1$ as above. Then none of the remaining non-infected sites could be updated at all, since even the best boundary condition $L_2$ can provide does not allow $L_1$ to infect them. Since it was assumed that $\{\h\}_L=L$, this implies that all $R$ sites are infected, as desired.
\end{proof}
Note that the above is sufficient to obtain Theorem~\ref{main} for $|L|\le (2/q)^{CR}$.

\begin{proof}[Proof of the harder case of Proposition~\ref{two-block}]
We consider two copies $(\h(t))_{t\ge 0}$, $(\h'(t))_{t\ge 0}$ of the process from Remark~\ref{aux}. It is well known \cite{Levin09}*{Proposition~4.7, Corollary~12.6, Remark~13.13}\footnote{For continuous time Markov chains the spectral radius in \cite{Levin09}*{Corollary~12.6} is replaced by $e^{-1/\trel}$.} that it suffices to couple them so that the probability that they do not meet before time $T$ is at most $C e^{-T/\g(\D)}$ for any $T$ large enough. Observe that whenever several successive updates are performed at $L_1$ (and similarly for $L_2$), the final result is preserved if we discard all but the last update, since the dynamics of Remark~\ref{aux} is of Glauber type. Hence, we may consider a discrete time chain with the same state space which updates $L_1$ at odd steps and $L_2$ at even ones (so the update from time $0$ to time $1$ is in $L_1$). Conditionally on the number of alternating updates $N$ up to time $T$, after removing redundant ones as indicated above, the two chains $\h$ and $\h'$ meet if their discrete time versions do. We denote the latter by $\o$ and $\o'$. 

We assume that $\D\ge CR^2/q^{CR}$, the alternative being treated in a similar but simpler way as sketched above. We call any set $B\subset L_1\cap L_2$ of $2R+1$ consecutive sites a \emph{block} and say it is \emph{infected} if $\cI_x$ occurs for all $x\in B$. 
\begin{claim}
\label{isolation}
Fix $\theta\in\cS_L$ such that $\{\theta\}_L=L$ and an infected block $B=x+\{1,\dots,2R+1\}$. Then $\{\theta\}_{\{1,\dots,x\}}^{\theta_B}=\{1,\dots,x\}$.
\end{claim}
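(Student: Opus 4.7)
The plan is to compare the monotone bootstrap processes underlying the two KCM dynamics---the full one on $L$ and the reduced one on $\{1,\dots,x\}$ with boundary $\theta_B$---and show that the range bound together with the infection of $B$ forces them to produce identical infection sets within $\{1,\dots,x\}$. This transfers the hypothesis $\{\theta\}_L=L$ directly into the claim.

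First I would recall the standard monotone bootstrap reformulation of the closure implicit in Observation~\ref{bootstrap}: $\{\h\}_\Lambda^\o$ is the set of $y\in\Lambda$ such that some rule $U\in\cU_y$ is contained in the bootstrap fixed point $I_\infty$, where $I_0$ is the set of initially infected sites in $\h\cdot\o$ and $I_{k+1}=I_k\cup\{z\in\Lambda:\exists V\in\cU_z,\,V\subset I_k\}$.

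The key observation is a simple range computation: for every $y\in\{1,\dots,x\}$ and every $U\in\cU_y$,
\[U\subset L\cap\{y-R,\dots,y+R\}\subset\{1,\dots,x+R\}\subset\{1,\dots,x\}\cup B,\]
where the last inclusion uses $B\supset\{x+1,\dots,x+R\}$. Thus whether $U\subset I_k$ only depends on the states of sites in $\{1,\dots,x\}\cup B$. Since $B$ is infected, it lies in both the full initial infected set (read off $\theta$) and the reduced one (read off $\theta_B$). A routine induction on $k$ built on this observation then yields $I_k^{\mathrm{full}}\cap\{1,\dots,x\}=I_k^{\mathrm{red}}\cap\{1,\dots,x\}$ for all $k$, hence in the limit.

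Applying the closure characterisation, the two conditions ``some $U\in\cU_y$ lies in $I_\infty^{\mathrm{full}}$'' and ``some $U\in\cU_y$ lies in $I_\infty^{\mathrm{red}}$'' coincide for $y\in\{1,\dots,x\}$, since any such $U$ is contained in $\{1,\dots,x\}\cup B$, the two limit sets agree on $\{1,\dots,x\}$, and both contain $B$. The hypothesis $\{\theta\}_L=L$ yields the former condition for every such $y$, hence $\{\theta\}_{\{1,\dots,x\}}^{\theta_B}\supset\{1,\dots,x\}$; the reverse inclusion is definitional. The main subtlety is conceptual rather than technical: recognising that an infected block of width $2R+1$ is wide enough (in fact only its first $R$ sites matter here) that no rule attached to a site $y\le x$ can reach past it, so that the reduced dynamics faithfully reproduces the full bootstrap on $\{1,\dots,x\}$.
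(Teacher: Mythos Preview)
Your proof is correct and follows essentially the same idea as the paper. The paper's proof is a one-line appeal to monotonicity of the closure in the set of infections, observing that a fully infected block is the maximal possible boundary condition; your argument unpacks precisely this monotonicity by an explicit inductive comparison of the bootstrap iterates $I_k^{\mathrm{full}}$ and $I_k^{\mathrm{red}}$, using the range bound to see that rules at sites $y\le x$ only look into $\{1,\dots,x\}\cup B$.
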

\begin{proof}
This follows immediately from the fact that the closure is increasing in the set of infections (since constraints are), since an infected block is the maximal possible boundary condition.
\end{proof}

Let us denote by $M=\ell-\lfloor\D/2\rfloor+\{-R,\dots,R\}$ the middle block. Our coupling of $\o$ and $\o'$ is the following for integer $t\ge 0$.
\begin{itemize}
    \item The two chains evolve independently between $2t$ and $2t+2$, unless \begin{equation}
    \label{eq:condition}\left\{\o(2t)\right\}_{L_1}^{(\o(2t))_{L\setminus L_1}}\cap\left \{\o'(2t)\right\}_{L_1}^{(\o'(2t))_{L\setminus L_1}}\supset M.
    \end{equation}
    \item If Eq.~\eqref{eq:condition} occurs, we first sample two independent configurations $\x,\x'\in\cS_{L_1}$ with the laws of $(\o(2t+1))_{L_1}$ and $(\o'(2t+1))_{L_1}$, given $\o(2t)$ and $\o'(2t)$. Let $x+\{-R,\dots,R\}$ be the rightmost block contained in $L_1\cap L_2$ infected in both $\x$ and $\x'$, if it exists. We set $\o(2t+1)=\x\cdot(\o(2t))_{L\setminus L_1}$ and
    \[
    \o'(2t+1)=\x_{\{1,\dots,x\}}\cdot\x'_{\{x+1,\dots,\ell\}}\cdot(\o'(2t))_{L\setminus L_1}
    \]
    and sample $\o(2t+2)=\o'(2t+2)$ with their (common) law given the state at time $2t+1$. If no such block exists, $(\o(2t+1))_{L_1}=\x$ and $(\o(2t+1))_{L_1}=\x'$ and the two evolve independently between $2t+1$ and $2t+2$.
\end{itemize}
This is a legitimate Markov coupling of the homogeneous chains $(\o(2t))_{t\ge 0}$ and $(\o'(2t))_{t\ge 0}$. Indeed, by Claim~\ref{isolation},  conditionally on $x+\{-R,\dots,R\}$ being the rightmost infected block, $\x_{\{1,\dots,x\}}$ and $\x'_{\{1,\dots,x\}}$ are identically distributed.
We define $X(t)=\left|M\cap \{\o(2t)\}_{L_1}^{(\o(2t))_{L\setminus L_1}}\right|$ and similarly for $\o'$. Equation~\eqref{eq:condition} then reads $X(t)=X'(t)=2R+1$. We will lower bound $\min(X(t),X'(t))$ by the discrete time Markov chain $Y(t)$ on $\{0,\dots,2R+2\}$ which:
\begin{itemize}
\item starts at $0$;
\item is absorbed when reaching $2R+2$;
\item increments by $1$ with probability
\begin{equation}
\label{eq:increment:probability}\left(1-\left(1-q^{4R+2}\right)^{\D/(4R+3)}\right)^4;
\end{equation}
\item jumps to $0$ with the complementary probability.
\end{itemize}
We call a transition of $Y$ to $0$ a \emph{failure}.

\begin{lemma}\label{failure}
For all $t\ge0$, $\bbP(Y(t)\neq 2R+2)\ge \bbP(\o\text{ and $\o'$ have not met by time } 2t)$.
\end{lemma}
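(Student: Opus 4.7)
The strategy is to couple $(Y(t))_{t\ge0}$ with the pair-step chain $(\o(2t),\o'(2t))_{t\ge0}$ so that $\{Y(t)=2R+2\}\subset\{\o(2t)=\o'(2t)\}$; the lemma then follows upon complementation. I build the coupling inductively: $Y(0)=0$ and, at each step $t\to t+1$, I identify an event $A_{t+1}$ measurable with respect to the update randomness used on the interval $[2t,2t+2]$, set $Y(t+1)=\min(Y(t)+1,2R+2)$ if $A_{t+1}$ occurs and $Y(t+1)=0$ otherwise, with $2R+2$ absorbing. The proof reduces to verifying (a) that $\bbP(A_{t+1}\mid\cF_{2t})\ge p$ pointwise in history, so that $Y$ matches (or stochastically dominates) the Markov chain in the statement, and (b) that any run of $2R+2$ successive occurrences of $A$ forces the two chains to coincide at the end of the run.

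For (a), I partition $L_1\cap L_2$ (of cardinality $\D$) into $n=\lfloor\D/(4R+3)\rfloor$ disjoint ``slot'' intervals of $2R+1$ consecutive sites each, separated by buffers of length $2R+2$ so that distinct slots lie at mutual distance strictly greater than $R$. By Claim~\ref{proba}, in any one resampling of the pair-step each slot is fully infected with probability at least $q^{2R+1}$, independently across slots of the same resampling. There are four resamplings per pair-step: the $L_1$-updates of $\o$ and of $\o'$ at time $2t+1$, and the $L_2$-updates at time $2t+2$. Whether or not condition~\eqref{eq:condition} is in force at $2t$, the two $L_1$-resamples are drawn with independent randomness (either directly in the independent case, or as independent $\x,\x'$ in the coupled case), and similarly for $L_2$. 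Requiring a common slot to be fully infected across each of four appropriately chosen pairs among the four resamples yields four mutually conditionally independent subevents, each of probability at least $1-(1-q^{4R+2})^n$, whose intersection I take as $A_{t+1}$, giving $\bbP(A_{t+1}\mid\cF_{2t})\ge p$.

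For (b), each success of $A_{t+1}$ produces a common fully infected block inside $L_1\cap L_2$ visible to both chains at the end of the pair-step. Claim~\ref{isolation} then factors the closure computations on either side of that block, so each successive success strictly enlarges the intersection of the two closures with $M$, advancing $\min(X,X')$ by at least one. After $2R+1$ consecutive successes this intersection equals $M$, so condition~\eqref{eq:condition} holds; the $(2R+2)$-th success then furnishes the rightmost common infected block used by the coupling rule and, by the argument following the coupling's definition together with Claim~\ref{isolation}, makes the subsequent $L_2$-update produce $\o(2(t+2R+2))=\o'(2(t+2R+2))$ at the end of the run.

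The main obstacle I anticipate is the monotonicity in (b): a success of $A_{t+1}$ places its common infected block in one of the $n$ slots selected by the dynamics rather than at a prescribed advancing position, and failures do not erase the progress already accumulated. One must argue, slot by slot via Claim~\ref{isolation}, that any newly produced common infected block inside $L_1\cap L_2$ strictly enlarges the intersection of the two closures with $M$, regardless of the location chosen. Once this monotonicity is secured, the four-fold conditional independence in (a) combines with the induction on $t$ to yield the lemma in the stated form.
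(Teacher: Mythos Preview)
Your overall architecture---couple $Y$ to the pair chain so that $2R+2$ consecutive successes force coalescence---is the same as the paper's, but two of the steps you treat as routine are exactly where the work lies, and as written they do not go through.

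First, in (a) you invoke Claim~\ref{proba} to say that in any resample each slot is \emph{fully} infected with probability at least $q^{2R+1}$. Claim~\ref{proba} only gives Bernoulli domination for sites lying in the closure $\{\o(2t)\}_{L_i}^{(\o(2t))_{L\setminus L_i}}$; sites outside the closure are frozen by the resample and need not be infected at all. A priori no particular slot in $L_1\cap L_2$ is guaranteed to lie in this closure (indeed, when $X(t)<2R+1$ the closure in $L_1$ may meet $L_1\cap L_2$ very sparsely), so your lower bound on $\bbP(A_{t+1}\mid\cF_{2t})$ is not justified. The paper's event $\cE$ asks only that for some block $B$ all sites of $B\cap\{\o(2t)\}_{L_1}^{(\o(2t))_{L\setminus L_1}}$ be infected, which is precisely what Claim~\ref{proba} can deliver.

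Second, even granting a block that is infected on its closure part, the monotonicity in (b) depends on \emph{where} the block sits relative to $M$ and in \emph{which} update it appears. The paper takes $B$ to the left of $M$ in the $L_1$-resample and $B'$ to the right of $M$ in the $L_2$-resample; Lemma~\ref{two-step} then uses Claim~\ref{isolation} to show first that $X$ does not decrease across the pair-step, and second that if it fails to strictly increase then $M$ is isolated on both sides, which together with $\{\o\}_L=L$ forces $X=2R+1$. A single common infected block at an arbitrary slot of $L_1\cap L_2$---the content of your $A_{t+1}$---does not yield this: a block to the right of $M$ produced by the $L_1$-resample gives no control, since the $L_1$-closure need not extend beyond $M$. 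Your ``four appropriately chosen pairs among the four resamples'' is also not well defined: the two $L_1$-samples $\x,\x'$ are simultaneous and independent, but the $L_2$-updates occur afterwards, depend on them, and in the coupled case there is only one $L_2$-sample. Once you replace full infection by closure-relative infection and impose the left/right placement, you recover exactly the paper's event $\cE$ and the substance of the argument becomes Lemma~\ref{two-step}, which is not a formality.
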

\begin{proof}
It suffices to prove that if Eq.~\eqref{eq:condition} holds, $\o$ and $\o'$ meet in two steps at least with the probability in Eq.~\eqref{eq:increment:probability}, while if Eq.~\eqref{eq:condition} fails, at least with the probability in Eq.~\eqref{eq:increment:probability} each of $X$ and $X'$ not equal to $2R+1$ increases. 

Assume that $X(t)=X'(t)=2R+1$. By Claim~\ref{isolation} (note that if $X(t)=2R+1$, then $M$ can be infected inside $L_1$) we have that $\{(\o(2t))_{L_1}\}_{L_1}^{(\o(2t))_{L\setminus L_1}}\supset \{1,\dots,\ell-\lceil \D/2\rceil\}$. Recalling Claim~\ref{proba} and the fact that the configurations $\x$ and $\x'$ are chosen independently, we obtain that the probability that $\o(2t+2)\neq\o'(2t+2)$ is at most $(1-q^{4R+2})^{\D/(4R+3)}\le\eqref{eq:increment:probability}$, since $\D\ge CR^2/q^{CR}$.

Next assume that $\min(X(t),X'(t))<2R+1$. Then $\o(2t+2)$ and $\o'(2t+2)$ are independent conditionally on $\o(2t),\o'(2t)$, so it suffices to establish that
\begin{equation}
\label{eq:Xincrement}
\bbP\left(X(t+1)\ge \min(X(t)+1,2R+1)|\o(2t)=\h\right)\ge \left(1-\left(1-q^{2R+1}\right)^{\D/(4R+3)}\right)^2
\end{equation}
for any $\h$ compatible with $X(t)$. Consider the event $\cE$ that in $\o(2t+1)$ for at least one block $B$ to the left of $M$ all sites in $B\cap\{\o(2t)\}_{L_1}^{(\o(2t))_{L\setminus L_1}}$ are infected and likewise for $\o(2t+2)$, a block $B'$ to the right of $M$ and $B'\cap\{\o(2t+1)\}_{L_2}^{(\o(2t+1))_{L\setminus L_2}}$. By Claim~\ref{proba}, $\bbP(\cE|\o(2t)=\h)$ is bounded by the r.h.s.\ of Eq.~\eqref{eq:Xincrement}. Thus, Proposition~\ref{two-step} below concludes the proof of Lemma~\ref{failure}.
\end{proof}
\begin{lemma}
\label{two-step}
In the above setting $\cE$ implies $X(t+1)\ge\min(X(t)+1,2R+1)$.
\end{lemma}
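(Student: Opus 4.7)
The plan is to track how the $L_1$-closure in the vicinity of $M$ evolves across the two updates, using the witness blocks $B$ and $B'$ from $\cE$ as effective separators, in the spirit of the warm-up sketch for the easier case of Proposition~\ref{two-block}. I first fix notation: by the fact that each $L_i$-update stays within its respective $L_i$-irreducible component, one has $\Omega:=\{\omega(2t)\}_{L_1}^{\omega(2t)_{L\setminus L_1}}=\{\omega(2t+1)\}_{L_1}^{\omega(2t+1)_{L\setminus L_1}}$ and similarly $\Psi:=\{\omega(2t+1)\}_{L_2}^{\omega(2t+1)_{L\setminus L_2}}=\{\omega(2t+2)\}_{L_2}^{\omega(2t+2)_{L\setminus L_2}}$. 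Under $\cE$, fix witness blocks $B\subset L_1\cap L_2$ to the left of $M$ with $B\cap\Omega$ fully infected in $\omega(2t+1)$, and $B'\subset L_1\cap L_2$ to the right of $M$ with $B'\cap\Psi$ fully infected in $\omega(2t+2)$.

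First I would use the block $B$ to argue that, after the $L_1$-update, the closure structure to the right of $B$ behaves as though $B$ were fully infected. Since the simplification hypothesis gives $\{\omega(2t+1)\}_L=L$ and the global closure is monotone non-decreasing in infections, I would form a fictitious upgraded configuration $\hat\omega^{(1)}$ by turning the states of $B\setminus\Omega$ into infected ones inside $\omega(2t+1)$: then $B$ is fully infected in $\hat\omega^{(1)}$ while still $\{\hat\omega^{(1)}\}_L=L$, so Claim~\ref{isolation} and its right-symmetric variant apply. The resulting full closure to the right of $B$ in $\hat\omega^{(1)}$ can then be compared to $\Psi$ in $\omega(2t+1)$, using that the latter depends only on values on its own complement, to extract useful information about $M\cap\Psi$ versus $M\cap\Omega$.

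A symmetric argument handles the $L_2$-update and block $B'$: in $\omega(2t+2)$, upgrading $B'\setminus\Psi$ to infected produces $\hat\omega^{(2)}$ with $B'$ fully infected and $\{\hat\omega^{(2)}\}_L=L$, so Claim~\ref{isolation} (left version) implies that the closure to the left of $B'$ in $\hat\omega^{(2)}$ is full. Combined with the outcome of the previous step, this should force at least one new site of $M$ into $\{\omega(2t+2)\}_{L_1}^{\omega(2t+2)_{L\setminus L_1}}$ in the case $X(t)<2R+1$, and keep all of $M$ inside it in the saturated case $X(t)=2R+1$, which is exactly the desired inequality $X(t+1)\ge\min(X(t)+1,2R+1)$.

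The main obstacle is that Claim~\ref{isolation} requires \emph{fully} infected blocks, whereas $\cE$ only supplies \emph{closure}-infected blocks. The upgrade trick bypasses this for the global closure via monotonicity, but extracting the sharp quantitative conclusion---that at least one \emph{strictly new} site of $M$ joins the $L_1$-closure in the non-saturated case---is delicate, because monotonicity of $[L_1](\cdot)$ in infections only yields an upper bound on $\Omega_2$ in terms of the upgraded closure. I expect this step will require either unpacking the proof of Claim~\ref{isolation} to exhibit the newly reachable $M$-site via an explicit bootstrap sequence using the unaltered region $\omega_{L_1\setminus L_2}$ together with the upgraded blocks $B$ and $B'$, or adapting the warm-up sketch's contradiction argument---if no new $M$-site is added, then the unreachable $M$-sites would contradict $\{\omega\}_L=L$ via the absence of any satisfiable rule around them given the two-sided block boundary.
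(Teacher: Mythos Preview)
Your plan has the right skeleton---use the witness blocks as separators and close with a contradiction when $X(t+1)=X(t)<2R+1$---but the crucial step is not carried out, and the device you propose for it (upgrading $B\setminus\Omega$ to infected) does not line up with what is actually needed.

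There are two specific gaps. First, for the monotonicity $X(t+1)\ge X(t)$: upgrading $B\setminus\Omega$ lets you apply Claim~\ref{isolation} to $\hat\omega^{(1)}$, but then you must transfer conclusions back to $\omega(2t+1)$, and monotonicity of closures goes the wrong way for this. The paper avoids this entirely by applying Claim~\ref{isolation} not in $L$ but in the \emph{simplified} KCM on $L_1$: after removing the sites of $L_1\setminus\Omega$ and absorbing them into the boundary condition, the closure is full by construction and $B\cap\Omega$ is a genuinely fully infected block in that simplified volume. This directly gives Eq.~\eqref{eq:isolation} and the inclusion $\cM:=M\cap\Omega\subset M\cap\{\omega(2t+1)\}_{L_2}^{\z'}$ without any fictitious upgrade of $B$. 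You did not identify this reduction, and without it your Step~1 does not close.

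Second, for the strict increase: the contradiction you sketch (``unreachable $M$-sites contradict $\{\omega\}_L=L$ given the two-sided block boundary from $B,B'$'') is not how the argument goes, and upgrading $B$ and $B'$ is again not the right move. The paper upgrades \emph{$\cM$} to infected in $\omega(2t+1)$, obtaining $\bar\omega$. Because sites in $\cM$ lie in both the $L_1$- and $L_2$-closure, infecting them does not change either closure (Observation~\ref{bootstrap}); under the equality $X(t+1)=X(t)$, neither closure contains any site of $M\setminus\cM$, so in $\bar\omega$ the block $M$ itself acquires the isolation property of Eq.~\eqref{eq:isolation} on both sides. This decomposes $\{\bar\omega\}_L$ as (left-of-$M$ closure) $\cup\ \cM\ \cup$ (right-of-$M$ closure), and since $\{\bar\omega\}_L\supset\{\omega(2t+1)\}_L=L$, one reads off $\cM=M$, i.e.\ $X(t)=2R+1$. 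The key idea you are missing is that the isolating block for the contradiction is $M$, not $B$ and $B'$.
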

\begin{proof}
Fix blocks $B=x+\{-R,\dots,R\}$ and $B'=x'+\{-R,\dots,R\}$ witnessing the occurrence of $\cE$ and denote $\theta=(\o(2t+1))_{L_1}$, $\z=(\o(2t+1))_{L\setminus L_1}$, $\theta'=(\o(2t+2))_{L_2}$ and $\z'=(\o(2t+2))_{L\setminus L_2}$ for shortness.

We know that $\{\theta\}_{L_1}^{\z}\cap B$ is infected. Therefore, 
\begin{align}
\nonumber\{\theta\}_{\{x,\dots,\ell\}}^{\theta_{\{x-R,\dots,x-1\}}\cdot\z}&{}=\{\theta\}_{L_1}^\z\cap\{x,\dots,\ell\},\\
\label{eq:isolation}\{\theta\}_{\{1,\dots,x\}}^{\theta_{\{x+1,\dots,x+R\}}}&{}=\{\theta\}_{L_1}^\z\cap\{1,\dots,x\},
\end{align}
by Claim~\ref{isolation} applied to the general KCM restricted to $L_1$ after performing the simplifications from the beginning of Section~\ref{sec:proof}. Consequently, 
\begin{align}
\nonumber\cM:={}&M\cap \{\o(2t)\}_{L_1}^\z=M\cap\{\theta\}_{L_1}^\z=M\cap\{\theta\}_{\{x,\dots,\ell\}}^{\theta_{\{x-R,\dots,x-1\}}\cdot\z}\\
\subset{}&M\cap\left\{\theta_{L_1\cap L_2}\cdot\z\right\}_{L_2}^{\z'}=M\cap\{\o(2t+1)\}_{L_2}^{\z'}.\label{eq:monotonicity}
\end{align}
Using the analogous relation for the second transition, we obtain $X(t+1)\ge X(t)$ and equality holds iff Eq.~\eqref{eq:monotonicity} and its analogue are equalities. 

Assume that $X(t+1)=X(t)$. Then, for an augmented configuration $\bar\o$ equal to $\o(2t+1)$ with additionally all sites in $\cM$ infected, neither update can modify states in $M\setminus\cM$. Thus, for $\bar\o$ the block $M$ simultaneously has the isolation property Eq.~\eqref{eq:isolation} of $B$ and its analogue for $B'$. Hence, 
\[\{\bar\o\}_L =  \{\bar\o\}_{\{1,\dots,\ell-\lfloor\D/2\rfloor-R-1\}}^{\bar\o_M}\cup\cM\cup \{\bar\o\}_{\{\ell-\lfloor\D/2\rfloor+R+1,\dots,|L|\}}^{\bar\o_M},\]
since the update rules of each site in $M$ cannot look both to the left of $M$ and to its right. Recalling that $\{\bar\o\}_L\supset\{\o(2t+1)\}_L=L$, we get $\cM=M$ yielding the desired conclusion that $X(t+1)=X(t)=2R+1$, since $X(t)=|\cM|$.
\end{proof}
Returning to the proof of Proposition~\ref{two-block}, clearly, in order for $Y$ not to be absorbed, at least one failure must occur in every $2R+2$ steps. Hence, the probability that $\h$ and $\h'$ have not met by time $T\ge 2$ is at most
\begin{multline*}e^{-T}\sum_{n=0}^\infty\frac{T^n}{n!}\left(1-\left(1-\left(1-q^{4R+2}\right)^{\D/(4R+3)}\right)^{8R+8}\right)^{\lfloor n/(4R+4)\rfloor}\\
\le e^{-T}T^{9R}+\exp\left(-T\left(1-A^{1/(9R)}\right)\right),
\end{multline*}
since $N$ has the Poisson distribution with parameter $T$, setting
\[A=1-\left(1-\left(1-q^{4R+2}\right)^{\D/(4R+3)}\right)^{8R+8}\le (8R+8)\exp\left(\frac{-\D q^{4R+2}}{4R+3}\right).\qedhere\]
\end{proof}



\providecommand{\bysame}{\leavevmode\hbox to3em{\hrulefill}\thinspace}
\providecommand{\MR}{\relax\ifhmode\unskip\space\fi MR }

\providecommand{\MRhref}[2]{%
  \href{http://www.ams.org/mathscinet-getitem?mr=#1}{#2}
}
\providecommand{\href}[2]{#2}

\ACKNO{We thank Cristina Toninelli for stimulating discussions and helpful remarks. We are also grateful to the anonymous referee for careful proofreading and helpful comments on the presentation of the paper.}
\end{document}